\newcommand{\la}{\lambda}
\newcommand{\dif}{\mathrm{d}}
\newcommand{\norm}[1]{\Vert #1\Vert}
\newcommand{\ov}{\overline}
\newcommand{\Ee}{\mathcal{E}}
\newcommand{\VV}{\mathcal{V}}
\newcommand{\Hh}{\mathcal{H}}
\newcommand{\Cl}{\mathrm{Cl}}
\newcommand{\cl}{\mathrm{cl}}
\newcommand{\Ker}{\mathop\mathrm{Ker\,}}
\newcommand{\CC}{\mathbb{C}}
\newcommand{\RR}{\mathbb{R}}
\newcommand{\ZZ}{\mathbb{Z}}
\newcommand{\SQ}{\mathcal{S}}
\theoremstyle{plain}
\newtheorem{te}{Theorem}
\newtheorem{co}{Corollary}
\newtheorem{lm}{Lemma}
\theoremstyle{definition}
\newtheorem{de}{Definition}
\newtheorem{re}{Remark}
\newtheorem{ex}{Example}
\begin{document}

\title{A characterization of Dirac morphisms}

\author{E. Loubeau}
\address{D\'epartement de Math\'ematiques\\
Universit\'e de Bretagne Occidentale\\
6, Avenue Victor Le Gorgeu\\
CS 93837, 29238 Brest Cedex 3\\
France}
\email{loubeau@univ-brest.fr}

\author{R. Slobodeanu}
\address{Faculty of Physics, Bucharest University,
405 Atomi\c stilor Str., CP Mg-11, RO - 077125 Bucharest, Romania.}

\email{radualexandru.slobodeanu@g.unibuc.ro}

\date{\today}

\thanks{The second author benefited from a one-year fellowship of the Conseil G{\'e}n{\'e}ral du Finist{\`e}re}
\subjclass[2000]{53C27, 58J05}
\keywords{Harmonic spinors, submersions}

\begin{abstract}
Relating the Dirac operators on the total space and on the base manifold of a horizontally conformal submersion, we
characterize Dirac morphisms, i.e. maps which pull back (local) harmonic spinor fields onto (local) harmonic spinor fields.
\end{abstract}

\maketitle

\section{Introduction}

Introduced by Jacobi~\cite{Jacobi} in 1848, harmonic morphisms are maps which pull back local harmonic functions onto
harmonic functions and, more recently, they were characterized by Fuglede~\cite{Fuglede} and Ishihara~\cite{Ishihara} as horizontally weakly
conformal harmonic maps. Their dual nature of analytical and geometrical objects has led to a rich theory (cf.~\cite{B-W}) which has encouraged the study of various other morphisms, that is maps preserving germs of certain differential operators.
The central role of the Dirac operator in differential geometry and mathematical physics called for this approach to be applied to
harmonic spinors. Unlike previous cases, the first hurdle is to make sense of a notion of pull-back of spinors by a map. This requires the identification of the spinor bundles involved, necessarily restricting our investigation to horizontally conformal maps between Riemannian manifolds and even-dimensional targets (cf. Section~\ref{section2}). Combining a chain rule for the Dirac operator and a local existence lemma, we show that a horizontally conformal submersion between spin manifolds is a Dirac morphism if and only if its horizontal distribution is integrable and the mean curvature of the fibres is related to the dilation factor, in a manner reminiscent of the fundamental equation for harmonic morphisms.
We conclude with some simple examples between Euclidean spaces and explicit our results in the set-up of~\cite{moro}, which inspired
initially our construction.

\section{Pull-back of a spinor}~\label{section2}

Let $(M^{m}, g)$ be a spin Riemannian manifold, the two-sheeted covering $\mathrm{Spin}(m) \stackrel{\rho}{\longrightarrow}
\mathrm{SO(m)}$ induces a double cover $\chi: P_{\mathrm{Spin}(m)}M \longrightarrow P_{\mathrm{SO(m)}}M$ of the bundle of positively
oriented orthonormal frames by the principal $\mathrm{Spin}(m)$-bundle over $M$, such that $\chi(s \cdot g)=\chi(s)\cdot \rho(g)$,
$\forall  s \in P_{\mathrm{Spin}(m)}M, g \in \mathrm{Spin}(m)$. The associated bundle $\Cl(M) = P_{\mathrm{SO(m)}}M \times _{cl_{m}}
\Cl_{m}$ is the {\em Clifford bundle}, where $\Cl_{m}$ is the Clifford algebra and $\cl_{m}$ the representation of $\mathrm{SO(m)}$ into
$\mathrm{Aut}(\Cl(\RR^m))$, and the {\em spinor bundle} is $\SQ M = P_{\mathrm{Spin}(m)}M \times _{\gamma} \SQ_{m}$, with $\gamma$ the
spinorial representation of $\mathrm{Spin}(m)$ on the Clifford module $\SQ_{m}=\CC^{2^{[m/2]}}$ (cf.~\cite {law}).

A  {\em spinor field} is a (smooth) section of $\SQ M$, $\Psi: U \subset M \longrightarrow \SQ M$, $\Psi(x)=[s_{x}, \psi(x)]$,
where $s_{x} \in P_{\mathrm{Spin}(m)}M$ is a spinorial frame at $x \in M$ and
$\psi: U \longrightarrow \SQ_{m}$, the equivalence class being defined by
$$[s, \psi]=[s \cdot g^{-1}, \gamma(g) \psi],$$
for all $g \in \mathrm{Spin}(m)$. The covariant derivative is
$$\nabla_{e_{j}} \Psi = \left[s, \ \ \dif \psi (e_{j}) + \tfrac{1}{2}\sum_{k<l=1}^{m} \Gamma_{jk}^{l}\ e_{k} \cdot e_{l} \cdot \psi \right],$$
where $\chi(s)=\{e_{1},...,e_{m}\}$ is an orthonormal frame of $TM$, ``$\cdot$'' is the Clifford multiplication
and the symbols $\{\Gamma_{ij}^{k}\}_{i,j,k=1,\dots,m}$ are defined by $\nabla_{e_{i}}e_{j}=\Gamma_{ij}^{k}e_{k}$.

The {\em Dirac operator} is the first-order differential operator defined by
\begin{align*}
D^M:\Gamma (\SQ M) &\to \Gamma (\SQ M) \\
 \Psi &\mapsto D^M \Psi = \sum_{j=1}^{m} e_{j} \cdot \nabla_{e_{j}} \Psi .
\end{align*}

The above constructions extend to any oriented Riemannian bundle and, as for
orientation, given three vector bundles $\Ee^{\prime}$, $\Ee^{\prime \prime}$ and
$\Ee = \Ee^{\prime} \oplus \Ee^{\prime \prime}$ over $M$, a choice of spin-structure
on any two of them uniquely determines a spin structure on the third (\cite{law}).

\begin{de}
A smooth map $\pi : (M^m , g) \to (N^n ,h)$ between Riemannian manifolds is a {\em horizontally conformal map} if,
at any point $x\in M$, $d\pi_{x}$ maps the {\em horizontal space} $\Hh_{x} = (\ker d\pi_{x})^{\perp}$ conformally onto
$T_{\pi(x)}N$, i.e. $d\pi_{x}$ is surjective and there exists a number $\lambda (x) \neq 0$ such that
$$(\pi^{*}h)_{x}\Big|_{\Hh_{x} \times \Hh_{x}} = \lambda^{2}(x) g_{x}\Big|_{\Hh_{x} \times \Hh_{x}}.$$
The function $\lambda$ is the {\em dilation} of $\pi$ and the orthogonal complement of $\Hh_{x}$ is the {\em vertical distribution}
$\VV_{x} = \ker d\pi_{x}$.\\
The mean curvatures of the distributions $\Hh$ and $\VV$ are denoted $\mu^{\Hh}$ and $\mu^{\VV}$ and $I^{\Hh}$
is the integrability tensor of $\Hh$.\\
A frame $\{V_{a},X_{i}^{*}\}^{a=1,\dots,m-n}_{i=1,\dots,n}$ of $TM$ will be called {\em adapted} if $V_{a} \in \VV, a =
1,\dots,m-n$
and $\{X_{i}^{*}\}_{i=1,\dots,n}$ is the horizontal lift by $\pi$ of an orthonormal frame $\{X_{i}\}_{i=1,\dots,n}$ on $N$. \\
Note that $\lambda \equiv 1$ corresponds to Riemannian submersions.\\
We call the map $\pi : (M^m , g_{1}) \to (N^n ,h)$, where $g_{1}=\pi^{*}h+g^{\VV}$, the {\em associated} Riemannian submersion of $\pi : (M^m , g) \to (N^n ,h)$.
\end{de}

For the remainder of the article, we will make the blanket assumption that the dimension $n$ of the manifold $N$ is even.
Since a general submersion $\pi : (M^m , g) \to (N^n ,h)$, between spin Riemannian manifolds, splits the tangent bundle
$TM$ into $\Hh \oplus \VV$, if $\Hh$ admits a spin structure, so does $\VV$ and
\begin{equation} \label{splitc}
\Cl(M) \ = \ \Cl(\Hh) \ \hat{\otimes} \ \Cl(\VV) .
\end{equation}
The spin structures $P_{\mathrm{Spin}(n)}\Hh$ and $P_{\mathrm{Spin}(m-n)}\VV$ induce
a spin structure $P_{\mathrm{Spin}(m)}M$ by prolongation of the principal bundle
$P_{\mathrm{Spin}(n) \times \mathrm{Spin}(m-n)}M$ (cf.~\cite{hus}). General
properties of associated bundles of reduced principal bundles (\cite[Theorem
3.1]{hus}) together with a dimension count yield the following isomorphisms of
(associated) vector bundles
\begin{align} \label{splits2}
\SQ M \ &= \ \SQ \Hh \otimes \SQ \VV .
\end{align}

For any map $\pi : M \to N$ into a spin manifold, consider the pull-back spinor bundle
$$\pi^{-1}\SQ N = \{ (x, [s, \psi]) \in M \times \SQ N \ | \ \varpi_{\SQ}([s, \psi])=\pi(x) \},$$
where $\varpi_{\SQ}$ is the projection map of $\SQ N$.\\
If $\pi$ is a Riemannian submersion, then the isomorphism $\pi^{-1}\SQ N  =  \SQ
\Hh$, due to the identification of orthonormal frames, simplifies \eqref{splits2} into (cf. \cite{bis})
\begin{equation}
 \SQ M =\pi^{-1}\SQ N \otimes \SQ \VV.
\end{equation}

\begin{re}
When $\pi$ is a Riemannian submersion with totally geodesic fibres, $\Hh$ 
complete and $N$ connected then the fibres are isometric to a Riemannian manifold $F$. If
$N$ and $F$ are spin manifolds, consider on $M$ the induced spin structure and, via
the isomorphism $\pi^{-1}\SQ N  =  \SQ \Hh$, \eqref{splits2} reads (see \cite{moro})
$$ \SQ M =\pi^{-1}\SQ N \otimes \SQ F.$$
\end{re}

\begin{re}
Since $n$ is even, the Clifford algebra $\Cl_n$ possesses an irreducible complex module
$\mathcal{S}_{n}$ of complex dimension $2^{n/2}$, the complex spinor module. When
restricted to $\Cl^{0}_{n}$ the spinor module decomposes into $\mathcal{S}_{n} =
\mathcal{S}^{+}_{n}\oplus \mathcal{S}^{-}_{n}$, the submodules of spinors of positive and  negative chirality, 
characterized by the action of the volume element, once an
orientation is given.
In particular, the spin group $\mathrm{Spin}(n) \subset \Cl^{0}_{n}$ acts on
$\mathcal{S}^{+}_{n}$ and on $\mathcal{S}^{-}_{n}$ (the spinor representations).\\
Moreover, $\mathcal{S}_{n+1}$ pulls back to $\mathcal{S}_{n}$ under the
algebra isomorphism $\Cl_{n} = \Cl^{0}_{n+1}$. In other words, we can regard
$\mathcal{S}_{n+1}$ as the spinor representation of $\Cl_{n}$, provided we define the
action of $\Cl_{n}$ on $\mathcal{S}_{n+1}$ by
$v \otimes \sigma \rightarrow e_0 \cdot v \cdot \sigma$.
\end{re}

When $\pi : (M^{n+1},g) \to (N^{n},h)$ is a Riemannian submersion with
one-dimensional fibres into a spin manifold $N^{n}$, the manifold $M^{n+1}$ inherits
a natural spin structure, cf~\cite{amoro}. Moreover, $\SQ M = \pi^{-1}\SQ N$.

These identifications justify the following definition.
\begin{de}[Riemannian submersions]
Let $\pi : (M^m , g) \to (N^n ,h)$ be a Riemannian submersion between spin manifolds
and endow the vertical bundle with the induced spin structure (if $m=n+1$, we
consider on $M$ the natural spin structure inherited from $N$). Let $\Psi=[s, \psi]$
be a (local) spinor field on $N$. Since a local spin frame
$s=\{X_{i}\}_{i=1,\dots,n}$ on $N$ lifts to an adapted spin frame on $M$
$$\tilde{s}=\{X_{i}^{*}, V_{a}\}_{i=1,\dots,n}^{a=1,\dots,m-n}\in
P_{\mathrm{Spin}(n) \times _{\mathbb{Z}_{2}} \mathrm{Spin}(m-n)}M \vert _{\pi^{-1}U},$$
where $X_{i}^{*}$ is the horizontal lift of $X_{i}$ and $\{V_{a}\}_{a=1,\dots,m-n}$ is an orthonormal frame of $\VV$,
we define the {\em pull-back} $\widetilde \Psi$ of $\Psi$ to be
\begin{itemize}
\item If $m-1=n$, the section $\widetilde \Psi=[\Tilde{s}, \psi \circ \pi]$ of the bundle
$\pi^{-1}\SQ N$, identified with $\SQ M$.\\
\item If $m-n \geq 2$, the section
$\widetilde{\Psi}=[\widetilde{s}, \widetilde \psi=(\psi \circ \pi) \otimes \alpha]$
in $\pi^{-1}\SQ N \otimes \SQ \VV$, identified with $\SQ M$, where $\alpha$ is a fixed (non-zero) section of $\SQ \VV$.
\end{itemize}
\end{de}

\begin{re}
When $m-n \geq 2$, this notion of pull-back depends on the choice of the section $\alpha$. In general, there exists no such non-vanishing global section.
\end{re}

\begin{re}
Note that Clifford multiplication with this kind of spinor fields is given by
$$
X^* \cdot \widetilde{\psi}  = \widetilde{X \cdot \psi}; \quad V \cdot
\widetilde{\psi} = \mathrm{i} \widetilde{\ov{\psi}},
$$
when $m=n+1$, and by
$$
X^* \cdot ((\psi\circ\pi) \otimes \alpha) = (( X \cdot \psi)\circ\pi) \otimes \alpha;
\quad V \cdot ((\psi\circ\pi) \otimes \alpha) = (\ov \psi\circ\pi) \otimes V \cdot
\alpha,
$$
when $m-n \geq 2$, where $X^*$ is the horizontal lift of the vector field $X \in
\Gamma(TN)$, $V$ is a unit vertical vector field and $\ov \psi$ is the conjugate of $\psi$ with respect to the $\ZZ
_2$-graduation (see~\cite{moro}).
\end{re}

\begin{re}
For a horizontally conformal submersion $\pi : (M^m , g) \to (N^n ,h)$, we deform the metric on $M$ into
$g_{1}=\pi^{*}h+g^{\VV}$ and denote by (cf.~\cite{law})
\begin{align*}
& \xi_{\gamma} : \SQ M_{1} \to \SQ M \\
& \xi_{\gamma}([s, \psi]) = [\xi(s), \psi],
\end{align*}
where  $\xi(s)=\{ E_i=\lambda E_i ^1, V_a \}$ if $s=\{ E_i ^1, V_a \}$, the bundle isometry induced by the Spin-equivariant map
$$\xi: P_{\mathrm{Spin}(n) \times _{\mathbb{Z}_{2}} \mathrm{Spin}(m-n)}M_{1}
\to P_{\mathrm{Spin}(n) \times _{\mathbb{Z}_{2}} \mathrm{Spin}(m-n)}M$$
given by the natural correspondence between adapted orthogonal frames with respect to the two metrics:
$E_{i}^{1}=\lambda^{-1} E_{i}, V^{1}_{a}=V_{a}$. \\
The Clifford multiplication will be given by
$$
E_i \cdot \Psi = \xi_{\gamma}\left(E_i ^1 \cdot \Psi_1\right), \qquad
V_a \cdot \Psi = \xi_{\gamma}\left(V_a \cdot \Psi_1\right),
$$
where $\Psi=\xi_{\gamma} \circ \Psi_{1}$.
\end{re}

\begin{de}[Horizontally conformal submersions]
Let $\pi : (M^m , g) \to (N^n ,h)$ be a horizontally conformal submersion between spin manifolds and
endow the vertical bundle with the induced spin structure. Let $\Psi=[s, \psi]$ be a (local) spinor field on $N$.
The {\em pull-back} of $\Psi$ is
$\widetilde{\Psi}=\xi_{\gamma} \circ \widetilde{\Psi}_{1}$, where
$\widetilde{\Psi}_{1}$ is the pull-back of $\Psi$ by the associated Riemannian submersion $\pi : (M^m , g_{1}) \to (N^n ,h)$
and $\xi_{\gamma}$ the bundle isometry between $\SQ M_{1}$ and $\SQ M$.
\end{de}

\section{Dirac morphisms with high dimensional fibres}

Throughout this section $\pi$ has fibres of dimension at least two.

\begin{de}\label{def4}
A horizontally conformal submersion $\pi : (M^m ,g) \to (N^n ,h)$ between spin manifolds is called a {\em Dirac morphism} if
there exists a section $\alpha \in \Gamma(\SQ \VV)$, $\nabla^{\VV}$-parallel in horizontal directions with $D^{\VV}\alpha - \frac{n}{2} \mu^{\Hh} \cdot \alpha= 0$, and if for any local harmonic spinor $\Psi$ defined on $U\subseteq N$, such that $\pi^{-1}(U)\neq \emptyset$, 
the pull-back of $\Psi$ (with respect to $\alpha$) $\widetilde{\Psi}=\xi_{\gamma} \circ \widetilde{\Psi}_{1}$ is a harmonic spinor on $\pi^{-1}(U) \subseteq M$, where $\widetilde{\Psi}_{1}$ is the pull-back (with respect to $\alpha$) of $\Psi$ by the associated Riemannian submersion.
\end{de}

\begin{re}
We assume, in Definition~\ref{def4}, the existence of the section $\alpha$ in order to construct pull-backs of spinors. Though these pull-backs will depend on the choice of $\alpha$ (if any), the two conditions on $\alpha$ will make the notion of Dirac morphism independent of the choice of such a section $\alpha$.
\end{re}

We first need some lemmas.

\begin{lm}[Chain rule]
Let $\pi : (M^m ,g) \to (N^n ,h)$ be a horizontally conformal submersion of dilation $\lambda$ ($m-n \geq 2$) and $\psi$ a (local) spinor field on $N$. 
If $ \widetilde{\Psi}$
is the pull-back of $\Psi$ by $\pi$, with respect to some section $\alpha \in \Gamma(\SQ \VV)$, then

\begin{equation} \label{chainhwc}
\begin{split}
D^{M} \widetilde{\psi}  =
&\lambda \widetilde{D^{N}\psi}
-\tfrac{1}{2} \left[(m-n) \mu^{\VV}+(n-1)\mathrm{grad}^{\Hh}(\mathrm{ln}\lambda) \right] \cdot \tilde \psi  \\
&+ \sum _{i=1}^{n} E_{i} \cdot (\psi \circ \pi) \otimes \nabla^{\VV}_{E_{i}}\alpha
+\tfrac{1}{4} \ I^{\Hh} \cdot \tilde \psi  \\
&+(\ov \psi \circ \pi) \otimes \left[D^{\VV}\alpha-\tfrac{n}{2} \mu^{\Hh} \cdot \alpha \right],
\end{split}
\end{equation}
where  $\{E_{i}\}_{i=1\dots,n}$ is a local orthonormal horizontal frame on $M$ and $I^{\Hh} \cdot \widetilde \psi$ denotes
$\sum _{i < j = 1}^{n} E_{i} \cdot E_{j} \cdot I^{\Hh}(E_{i} , E_{j}) \cdot \widetilde \psi$
(the standard action of vector-valued 2-forms on spinor fields).
\end{lm}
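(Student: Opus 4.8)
The plan is to prove \eqref{chainhwc} in two stages mirroring the definition $\widetilde\Psi=\xi_\gamma\circ\widetilde\Psi_1$: first compute the Dirac operator of the associated Riemannian submersion $\pi:(M^m,g_1)\to(N^n,h)$ on $\widetilde\psi_1=(\psi\circ\pi)\otimes\alpha$ (where the dilation is $1$), then transport the outcome through the horizontal conformal change $g_1\to g$ by means of the bundle isometry $\xi_\gamma$ and the Clifford identities $E_i\cdot\Psi=\xi_\gamma(E_i^1\cdot\Psi_1)$, $V_a\cdot\Psi=\xi_\gamma(V_a\cdot\Psi_1)$. I would fix an adapted orthonormal frame $\{E_i,V_a\}$, with $E_i=\lambda E_i^1$ horizontal and $\{V_a\}$ vertical, and expand
$$
D^M\widetilde\psi=\sum_{i=1}^{n}E_i\cdot\nabla_{E_i}\widetilde\psi+\sum_{a=1}^{m-n}V_a\cdot\nabla_{V_a}\widetilde\psi,
$$
computing the two sums separately and then sorting each resulting term according to whether Clifford multiplication lands on the base factor $\psi\circ\pi$ or, through the rule $V\cdot((\psi\circ\pi)\otimes\alpha)=(\ov\psi\circ\pi)\otimes V\cdot\alpha$, on the fibre factor $\alpha$; the latter is exactly what produces the conjugated spinor $\ov\psi$ in the last line of \eqref{chainhwc}.

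The computational engine is the covariant-derivative formula for $\SQ M=\SQ\Hh\otimes\SQ\VV$, written through the connection coefficients of the adapted frame, combined with O'Neill's structure equations, which decompose $\nabla_{E_i}E_j$, $\nabla_{V_a}V_b$ and the mixed derivatives into horizontal and vertical parts. The \emph{diagonal} pieces reconstitute the intrinsic operators: the frame derivatives of $\psi\circ\pi$ rebuild $\widetilde{D^N\psi}$, the vertical-frame derivatives of $\alpha$ rebuild $D^\VV\alpha$ (conjugated, hence appearing as $(\ov\psi\circ\pi)\otimes D^\VV\alpha$), and the horizontal-frame derivatives of $\alpha$ give the term $\sum_i E_i\cdot(\psi\circ\pi)\otimes\nabla^\VV_{E_i}\alpha$. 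The \emph{off-diagonal} O'Neill pieces, after the contractions $E_i\cdot E_i=-1$ and $V_a\cdot V_a=-1$ and the skew relation $e_k\cdot e_l=-e_l\cdot e_k$, collapse onto the three geometric tensors: the vertical part of $\nabla_{E_i}E_j$ ($i\neq j$) supplies $\tfrac14 I^\Hh\cdot\widetilde\psi$; the trace of the horizontal part of $\nabla_{V_a}V_a$ supplies the fibre mean curvature, entering as a horizontal Clifford factor $-\tfrac12(m-n)\mu^\VV\cdot\widetilde\psi$; and the trace of the vertical part of $\nabla_{E_i}E_i$ supplies $\mu^\Hh$, which, being vertical, is converted by the rule above into the conjugated contribution $-\tfrac{n}{2}(\ov\psi\circ\pi)\otimes\mu^\Hh\cdot\alpha$, the dimensional factors $(m-n)$ and $n$ arising as the traces that un-normalize the mean curvatures.

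It remains to insert the dilation through the second stage. Since $E_i=\lambda E_i^1$, differentiating $\psi\circ\pi$ along $E_i$ yields $\lambda$ times the base derivative, producing the leading term $\lambda\widetilde{D^N\psi}$; the residual derivatives of $\lambda$---equivalently, the difference between the Levi-Civita connections of $g$ and $g_1$ transported by $\xi_\gamma$---collect into the conformal correction $-\tfrac12(n-1)\,\mathrm{grad}^\Hh(\ln\lambda)\cdot\widetilde\psi$, the weight $n-1$ being the conformal exponent of the Dirac operator along the $n$ rescaled horizontal directions.

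I expect the main obstacle to be the bookkeeping of the second stage together with the coefficient tracking: one must cleanly separate, inside the spin connection, the parts that recombine into the base and fibre Dirac operators from the purely tensorial O'Neill parts, and then fix the exact constants $(m-n)$, $\tfrac{n}{2}$ and $\tfrac14$ through repeated use of the Clifford relations. A further subtlety is that $\mu^\VV$, $\mu^\Hh$ and $I^\Hh$ in \eqref{chainhwc} are the tensors of the original metric $g$, whereas the first stage produces their $g_1$-counterparts; one must verify that their transformation under the horizontal rescaling is entirely absorbed into the stated $g$-quantities and generates no term beyond the $\mathrm{grad}^\Hh(\ln\lambda)$ contribution already accounted for.
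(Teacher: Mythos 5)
Your plan is essentially the paper's proof: expand $D^M\widetilde\psi$ in an adapted orthonormal frame $\{E_i=\lambda X_i^*,V_a\}$, sort the connection-coefficient terms by whether they act on the base factor $\psi\circ\pi$ or (after conjugation) on the fibre factor $\alpha$, identify the diagonal pieces with $\lambda\widetilde{D^N\psi}$, $\nabla^{\VV}_{E_i}\alpha$ and $D^{\VV}\alpha$, and read off $\mu^{\VV}$, $\mu^{\Hh}$, $I^{\Hh}$ and the $-\tfrac{n-1}{2}\mathrm{grad}^{\Hh}(\ln\lambda)$ correction from the off-diagonal O'Neill pieces and the difference of the $g$- and $g_1$-connections. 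The only cosmetic difference is that the paper does the whole computation at once in the $g$-frame (using $\xi_\gamma$ inline to recognize lifts) rather than in your two explicit stages, and it obtains the net coefficient $\tfrac14$ on $I^{\Hh}$ as $\tfrac12$ from the $\nabla_{E_i}E_j$ terms minus $\tfrac14$ from the $\nabla_{V_a}E_i$ terms — a cancellation your sketch compresses but which careful execution of your plan would reproduce.
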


\begin{proof}
Let $\pi$ be a horizontally conformal submersion of dilation $\lambda$. Let $\{X_i \}_{i=1,\dots,n}$ be an orthonormal frame on $(N,h)$ and
$\{V_a, X^{*}_i \}_{i=1,\dots,n}^{a=1,\dots,m-n}$ an orthonormal adapted frame on $(M,g_1)$, where $g_1=\pi^* h+g^{\VV}$. With respect to the metric $g$, $\{V_a, \lambda  X^{*}_i \}_{i=1,\dots,n}^{a=1,\dots,m-n}$ is an orthonormal adapted frame. Denote by $\nabla$ and $\nabla ^1$ the (spinorial) connections corresponding to $g$ and $g_1$, and 
note $E_i ^1 = X_i ^*$, $E_i = \lambda X_i ^*$.

As $D^{M} \widetilde{\Psi} =[\tilde s, D^{M} \tilde \psi]$, for the pull-back spinor field $\tilde \psi$

\begin{align*}
D^{M} \widetilde{\psi} & =  \sum _{i=1}^{n} E_{i} \cdot \nabla_{E_{i}} \widetilde{\psi}+
 \sum _{a=1}^{m-n} V_{a} \cdot \nabla_{V_{a}} \widetilde{\psi}  \\
&= \sum _{i=1}^{n} E_{i} \cdot E_{i}((\psi \circ \pi) \otimes \alpha) \qquad (H_{0}) \\
&+ \tfrac{1}{4}\sum _{i, j, k=1}^{n} E_{i} \cdot g(\nabla_{E_{i}}E_{j}, E_{k}) \ E_{j} \cdot E_{k} \cdot \widetilde{\psi} \qquad (H_{1}) \\
&+ \tfrac{1}{2}\sum _{i, j, a=1}^{n,m-n} E_{i} \cdot g(\nabla_{E_{i}}E_{j}, V_{a}) \ E_{j} \cdot V_{a} \cdot \widetilde{\psi} \qquad (H_{2}) \\
&+ \tfrac{1}{4}\sum _{i, a, b=1}^{n,m-n} E_{i} \cdot g(\nabla_{E_{i}}V_{a}, V_{b}) \ V_{a} \cdot V_{b} \cdot \widetilde{\psi} \qquad (H_{3})
\end{align*}

\begin{align*}
\quad & +\sum _{a=1}^{m-n} V_{a} \cdot  V_{a}((\psi \circ \pi) \otimes \alpha) \qquad (V_{0}) \\
&+ \tfrac{1}{4}\sum _{i, j , a=1}^{n,m-n} V_{a} \cdot g(\nabla_{V_{a}}E_{i}, E_{j}) \ E_{i} \cdot E_{j} \cdot \widetilde{\psi} \qquad (V_{1}) \\
& + \tfrac{1}{2}\sum _{i, a, b=1}^{n,m-n} V_{a} \cdot g(\nabla_{V_{a}}E_{i}, V_{b}) \ E_{i} \cdot V_{b} \cdot \widetilde{\psi} \qquad (V_{2}) \\
&+ \tfrac{1}{4}\sum _{a, b, c=1}^{m-n} V_{a} \cdot g(\nabla_{V_{a}}V_{b}, V_{c}) \ V_{b} \cdot V_{c} \cdot \widetilde{\psi} \qquad (V_{3}).
\end{align*}
Note that
\begin{align*}
\widetilde{D^{N}\psi}&=\xi_{\gamma}(\widetilde{D^{N}\psi}^1)\\
&= \xi_{\gamma}\left[(X_{i}^{*}\cdot X_{i}^{*}(\psi \circ \pi) +
g_1(\nabla^{1}_{X_{i}^{*}}X_{j}^{*}, X_{k}^{*}) X_{i}^{*} \cdot X_{j}^{*}
\cdot X_{k}^{*} \cdot (\psi \circ \pi)) \otimes \alpha\right],
\end{align*}
where $g_1(\nabla^{1}_{X_{i}^{*}}X_{j}^{*}, X_{k}^{*})=
h(\nabla^N _{X_{i}}X_{j}, X_{k})$.\\
The computation breaks down into five steps:

\textbf{Step 1}:
\begin{equation}\label{eq6}
\begin{split}
(H_{0})+(H_{1})+(H_{3})=&\lambda \widetilde{D^{N}\psi}
- \tfrac{n-1}{2} \mathrm{grad}^{\Hh}(\mathrm{ln}\lambda)\cdot \widetilde \psi
+ \sum _{i=1}^{n} E_{i} \cdot (\psi \circ \pi) \otimes \nabla^{\VV}_{E_{i}}\alpha.
\end{split}
\end{equation}
As
\begin{align*}
(H_{0})+(H_{1})=&\sum _{i=1}^{n} E_{i} \cdot \left[E_{i}(\psi \circ \pi) \otimes \alpha+ (\psi \circ \pi) \otimes E_{i}(\alpha)\right] \\
&+ \tfrac{1}{4}\sum _{i, j, k=1}^{n} E_{i} \cdot g(\nabla_{E_{i}}E_{j}, E_{k}) \ E_{j} \cdot E_{k} \cdot \widetilde{\psi},
\end{align*}
in order to recognize the lift of $D^N \psi$, first observe that
\begin{align*}
g(\nabla_{E_{i}}E_{j}, E_{k}) &=
g(\nabla_{\la X_{i}^{*}} \la X_{j}^{*}, \la X_{k}^{*}) \\
&= \lambda g_1(\nabla^{1}_{X_{i}^{*}}X_{j}^{*}, X_{k}^{*}) +
\left[X_{k}^{*}(\lambda)\delta_{i}^{j} - X_{j}^{*}(\lambda)\delta_{i}^{k} \right].
\end{align*}
Whilst
\begin{align*}
E_{i} \cdot \ E_{j} \cdot E_{k} \cdot \widetilde{\psi} &=
\xi_{\gamma}(E_{i}^1 \cdot \ E_{j}^1 \cdot E_{k}^1 \cdot \widetilde{\psi} ^1)\\
&= \xi_{\gamma}(X_{i}^{*} \cdot X_{j}^{*} \cdot X_{k}^{*} \cdot \widetilde{\psi} ^1),
\end{align*}
and
\begin{align*}
E_{i} \cdot E_{i}(\psi \circ \pi) \otimes \alpha &=
\la E_{i} \cdot E_{i}^1(\psi \circ \pi) \otimes \alpha \\
&= \la \xi_{\gamma}(X_{i}^{*}\cdot X_{i}^{*}(\psi \circ \pi) \otimes \alpha).
\end{align*}
Hence
\begin{align*}
(H_{0})+(H_{1}) =& \lambda \xi_{\gamma}(\widetilde{D^{N}\psi}^1)
+ \sum _{i=1}^{n} E_{i} \cdot (\psi \circ \pi) \otimes E_{i}(\alpha)  \\
&+ \xi_{\gamma}\left(\tfrac{1}{4}\sum _{i,j,k=1}^{n} \left[X_{k}^{*}(\lambda)\delta_{i}^{j} - X_{j}^{*}(\lambda)\delta_{i}^{k} \right] X_{i}^{*} \cdot X_{j}^{*} \cdot X_{k}^{*} \cdot \widetilde \psi ^1 \right).
\end{align*}
The last term can be rewritten
\begin{align*}
\tfrac{1}{4}\sum _{i,j,k=1}^{n} \left[X_{k}^{*}(\lambda)\delta_{i}^{j} - X_{j}^{*}(\lambda)\delta_{i}^{k} \right]
X_{i}^{*} \cdot X_{j}^{*} \cdot X_{k}^{*} \cdot \widetilde \psi ^1 =
-\tfrac{n-1}{2}\mathrm{grad}^{\Hh _1}(\lambda)\cdot \widetilde \psi ^1 ,
\end{align*}
and, since $\mathrm{grad}^{\Hh}(\lambda) =\la ^2 \mathrm{grad}^{\Hh _1}(\lambda)$, it becomes
\begin{align*}
-\tfrac{n-1}{2}\xi_{\gamma}\left(\mathrm{grad}^{\Hh _1}(\lambda)\cdot \widetilde \psi ^1\right) &=
-\tfrac{n-1}{2} \la \mathrm{grad}^{\Hh _1}(\lambda)\cdot \widetilde \psi \\
&= -\tfrac{n-1}{2} \mathrm{grad}^{\Hh}(\mathrm{ln}\lambda)\cdot \widetilde \psi.
\end{align*}
Summing up with ($H_{3}$), we obtain~\eqref{eq6}.

\textbf{Step 2}:
\begin{equation}
(V_{2})= -\tfrac{m-n}{2} \ \mu^{\VV} \cdot \widetilde{\psi} .
\end{equation}

As $g(\nabla_{V_{a}}E_{j}, V_{b})= - g(E_{j}, \nabla_{V_{a}} V_{b})$ and $\VV$ is integrable
\begin{align*}
(V_{2})&=-\tfrac{1}{2}\sum _{a, b=1}^{m-n} V_{a} \cdot (\nabla_{V_{a}} V_{b})^{\Hh} \cdot V_{b} \cdot \widetilde{\psi}\\
&=-\tfrac{1}{2}\left( \sum _{a < b=1}^{m-n} V_{a} \cdot (\nabla_{V_{a}} V_{b})^{\Hh} \cdot V_{b}
+ \sum _{a > b=1}^{m-n} V_{a} \cdot (\nabla_{V_{a}} V_{b})^{\Hh} \cdot V_{b} \right) \cdot \widetilde{\psi}
-\tfrac{1}{2} \sum _{a=1}^{m-n} (\nabla_{V_{a}} V_{a})^{\Hh} \cdot \widetilde{\psi}\\
&=-\tfrac{1}{2}\left( \sum _{a < b=1}^{m-n} V_{a} \cdot [V_{a}, V_{b}]^{\Hh} \cdot V_{b} \right) \cdot \widetilde{\psi}
-\tfrac{m-n}{2}  \mu^{\VV} \cdot \widetilde{\psi} \\
&= -\tfrac{m-n}{2} \ \mu^{\VV} \cdot \widetilde{\psi}.
\end{align*}

\textbf{Step 3}:
\begin{equation}
(V_{0})+(V_{3}) = (\ov \psi \circ \pi) \otimes D^{\VV} \alpha,
\end{equation}
since
\begin{align*}
(V_{0})+(V_{3})=&\sum _{a=1}^{m-n}  (\ov \psi \circ \pi) \otimes V_{a} \cdot V_{a}(\alpha)
+ \tfrac{1}{4}\sum _{a, b, c=1}^{m-n} V_{a} \cdot g(\nabla_{V_{a}}V_{b}, V_{c}) \ V_{b} \cdot V_{c} \cdot \widetilde{\psi} \\
=&(\ov \psi \circ \pi) \otimes V_{a} \cdot \nabla^{\VV}_{V_{a}}\alpha.
\end{align*}

\textbf{Step 4}:
\begin{equation}
(H_{2})= \tfrac{1}{2}I^{\Hh} \cdot \widetilde \psi - \tfrac{n}{2}\mu^{\Hh}\cdot \widetilde \psi.
\end{equation}
As for Step 2, we have
\begin{align*}
(H_{2})=\tfrac{1}{2}\sum _{i, j, a=1}^{n,m-n} E_{i} \cdot g(\nabla_{E_{i}}E_{j}, V_{a}) \ E_{j} \cdot V_{a} \cdot \widetilde{\psi}= \tfrac{1}{2}\sum _{i < j=1}^{n} E_{i} \cdot E_{j} \cdot I^{\Hh}(E_{i}, E_{j}) \cdot \widetilde{\psi} - \tfrac{n}{2}\mu^{\Hh}\cdot \widetilde \psi,
\end{align*}
where the terms $i=j$ give $\mu^{\Hh}$.

\textbf{Step 5}:
\begin{equation}
(V_{1})= - \tfrac{1}{4}I^{\Hh} \cdot \widetilde \psi,
\end{equation}
since
\begin{align*}
(V_{1})=& \tfrac{1}{4}\sum _{i, j, a=1}^{n,m-n} g(\nabla_{V_{a}}E_{i}, E_{j}) V_{a} \cdot \ E_{i} \cdot E_{j} \cdot \widetilde{\psi}   \\
&= \tfrac{1}{4}\sum _{i, j, a=1}^{n,m-n} \left[g([V_{a}, E_{i}], E_{j}) -
g(\nabla_{E_{i}}E_{j}, V_{a}) \right] V_{a} \cdot \ E_{i} \cdot E_{j} \cdot \widetilde{\psi}  \\
&= \tfrac{1}{4}\sum _{i, j, a=1}^{n,m-n} g([V_{a}, E_{i}], E_{j}) V_{a} \cdot \ E_{i} \cdot E_{j} \cdot \widetilde{\psi} - \tfrac{1}{2}(H_2).
\end{align*}
But $g([V_{a}, E_{i}], E_{j})=g([V_{a}, \la X_{i}^{*}], \la X_{j}^{*})
= \tfrac{V_{a}(\la)}{\la} \delta_i ^j$, since $[V_{a}, X_{i}^{*}]\in \VV$. \\
Therefore
\begin{align*}
(V_{1})=&-\tfrac{n}{4}\sum _{a=1}^{m-n} V_{a}(\mathrm{ln}\la) V_{a} \cdot \widetilde{\psi} - \tfrac{1}{2}(H_2)\\
&=- \tfrac{n}{4}\mathrm{grad}^{\VV}(\mathrm{ln}\la) \cdot \widetilde{\psi} - \tfrac{1}{2}(H_2)\\
&=- \tfrac{n}{4}\mu^{\Hh} \cdot \widetilde{\psi} - \tfrac{1}{2}(H_2) \\
&=- \tfrac{1}{4}I^{\Hh} \cdot \widetilde \psi.
\end{align*}
Summing up these five steps yields the chain rule.
\end{proof}

A generalization of \cite[Proposition 2.4]{A-G} to vector-valued functions yields local existence of harmonic spinors with prescribed value.

\begin{lm}[Local existence]\label{localexistence}
For any point $p\in M$ and $\psi_0 \in \SQ_p M$, there exists an open neighbourhood $U$ of $p$ and a harmonic spinor
$\psi : U \to \SQ M$ such that $\psi(p)=\psi_0$.
\end{lm}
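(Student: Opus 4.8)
The plan is to reduce the existence of a prescribed-value harmonic spinor to a standard solvability statement for first-order elliptic systems, mimicking the scalar case in \cite[Proposition 2.4]{A-G}. First I would work in a small coordinate ball $U$ around $p$ with a local trivialization of $\SQ M$ by a spin frame $s=\{e_1,\dots,e_m\}$, so that sections of $\SQ M$ over $U$ are identified with $\CC^{2^{[m/2]}}$-valued functions $\psi$, and the Dirac operator takes the local form $D^M\psi = \sum_j c(e_j)\,\partial_j\psi + B\psi$, where $c(e_j)$ denotes Clifford multiplication (constant invertible matrices in this frame) and $B$ is a smooth zeroth-order term coming from the Christoffel symbols. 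The point is that $D^M$ is a first-order operator whose principal symbol $\sigma(D^M)(\eta)=\sum_j \eta_j\, c(e_j)$ is invertible for $\eta\neq 0$, since $\sigma(D^M)(\eta)^2 = -|\eta|^2\,\mathrm{Id}$; thus $D^M$ is elliptic and, being of Dirac type, determined overdetermined-free.

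The key step is to invoke the local solvability of $D^M\psi=0$ with prescribed value at a point. I would phrase this as follows: translate so that $p$ is the origin, and seek $\psi$ of the form $\psi = \psi_0 + \chi$ where $\chi$ is an unknown vanishing at $p$ to first order suffices if we only want $\psi(p)=\psi_0$. Because $D^M$ is a determined elliptic operator with smooth coefficients, the equation $D^M\psi=0$ admits, on a sufficiently small ball, a right inverse (a parametrix with a genuine solution operator after the usual Fredholm correction), so that the affine space of local solutions is infinite-dimensional and the evaluation map $\psi\mapsto\psi(p)$ from local harmonic spinors to $\SQ_pM$ is surjective. Concretely, one freezes coefficients at the origin to get the constant-coefficient operator $D_0=\sum_j c(e_j)\partial_j$, for which the constant spinor $\psi_0$ is already a solution ($D_0\psi_0=0$), and then corrects for the variable-coefficient remainder by a contraction-mapping/Neumann-series argument on a ball of radius $\e$, exploiting that the correction term is $O(\e)$ in the relevant H\"older or Sobolev norm.

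Concretely I would set up the fixed-point scheme: let $G$ be a solution operator for $D_0$ on the $\e$-ball (convolution with the fundamental solution of the constant-coefficient Dirac operator, composed with a suitable cutoff), and write $D^M = D_0 + R$ with $R$ a first-order perturbation whose coefficients vanish at the origin, so $\|R\|\to 0$ as the ball shrinks. Solving $D^M\psi=0$ with $\psi(p)=\psi_0$ is then equivalent to the equation $\psi = \psi_0 - G(R\psi)$ on a Banach space of spinor fields with controlled norm, and for $\e$ small the map $\psi\mapsto \psi_0 - G(R\psi)$ is a contraction, yielding a unique fixed point $\psi$ that is smooth by elliptic regularity and satisfies $\psi(p)=\psi_0$ by construction. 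The main obstacle, and the place where genuine care is needed, is arranging that the correction $G(R\psi)$ does not disturb the prescribed value at $p$: one must choose the solution operator $G$ (or equivalently the integration constant in the parametrix) so that the correction vanishes at the center of the ball, which is exactly where the vector-valued generalization of the argument in \cite[Proposition 2.4]{A-G} is invoked, the passage from scalar harmonic functions to $\CC^{2^{[m/2]}}$-valued harmonic spinors being otherwise formally identical.
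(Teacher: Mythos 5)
Your argument is correct and is precisely the route the paper intends: the paper offers no proof beyond the remark that the lemma follows from a generalization of \cite[Proposition 2.4]{A-G} to vector-valued functions, and your freezing-of-coefficients plus contraction scheme for the elliptic first-order system $D^M$ (with the solution operator $G$ normalized to vanish at the centre of the ball, which is harmless since $D_0$ annihilates constants, so that the prescribed value $\psi_0$ is undisturbed) is the standard way to carry that generalization out. Apart from routine details of the fixed-point estimate, there is no gap.
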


\begin{te}[Characterization for horizontally conformal submersions]\label{th3.2}
Let $\pi: (M,g) \longrightarrow (N,h)$ be a horizontally conformal submersion between spin manifolds and 
assume there exists a section $\alpha$ satisfying the conditions of Definition~\ref{def4}.\\
Then $\pi$ is a Dirac morphism if and only if its horizontal distribution is integrable and
\begin{equation}\label{F-Eq}
(m-n) \mu^{\VV}+(n-1)\mathrm{grad}^{\Hh}(\mathrm{ln}\lambda) = 0,
\end{equation}
where $\mu^{\VV}$ is the mean curvature of the fibres.
\end{te}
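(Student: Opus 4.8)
The plan is to read everything off the Chain rule \eqref{chainhwc}. Since the section $\alpha$ is assumed to satisfy the two conditions of Definition~\ref{def4}, the terms $\sum_i E_i\cdot(\psi\circ\pi)\otimes\nabla^{\VV}_{E_i}\alpha$ and $(\ov\psi\circ\pi)\otimes[D^{\VV}\alpha-\tfrac{n}{2}\mu^{\Hh}\cdot\alpha]$ vanish identically. If, in addition, $\psi$ is harmonic on $N$, then $D^{N}\psi=0$, so $\widetilde{D^{N}\psi}=0$, and \eqref{chainhwc} collapses to the purely algebraic (zeroth-order) identity
\begin{equation*}
D^{M}\widetilde\psi \;=\; \tfrac14\,I^{\Hh}\cdot\widetilde\psi \;-\; \tfrac12\,W\cdot\widetilde\psi,\qquad W:=(m-n)\mu^{\VV}+(n-1)\mathrm{grad}^{\Hh}(\mathrm{ln}\lambda).
\end{equation*}
Thus $\pi$ is a Dirac morphism precisely when the right-hand side vanishes for every pulled-back harmonic spinor. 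The \emph{if} direction is then immediate: when $\Hh$ is integrable ($I^{\Hh}=0$) and \eqref{F-Eq} holds ($W=0$), both surviving terms disappear and $\widetilde\psi$ is harmonic.

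For the converse I would localise. Fix $p\in M$, put $q=\pi(p)$, and apply the Local existence Lemma~\ref{localexistence} on the spin manifold $N$: for every $\psi_0\in\SQ_q N$ there is a harmonic spinor $\psi$ near $q$ with $\psi(q)=\psi_0$, whose pull-back $\widetilde\psi$ is harmonic by hypothesis. The displayed identity then forces $\tfrac14 I^{\Hh}\cdot\widetilde\psi(p)=\tfrac12 W\cdot\widetilde\psi(p)$. As $\psi_0$ ranges over $\SQ_q N\cong\SQ_p\Hh$, the value $\widetilde\psi(p)$ ranges (up to the isometry $\xi_\gamma$) over the whole subspace $\SQ_p\Hh\otimes\{\alpha(p)\}$, so the problem reduces to the pointwise algebraic statement that $\bigl(\tfrac14 I^{\Hh}-\tfrac12 W\bigr)\cdot\bigl(\psi_0\otimes\alpha(p)\bigr)=0$ for all $\psi_0\in\SQ_p\Hh$.

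The crux, and the step I expect to be the main obstacle, is to \emph{decouple} the two conditions, i.e. to argue that the $I^{\Hh}$-term and the $W$-term cannot compensate one another and must vanish separately; here the $\ZZ_2$-grading is decisive. Recall that $W$ is horizontal whereas each $I^{\Hh}(E_i,E_j)$ is vertical, so by the Clifford action rules of the relevant Remark, $W\cdot(\psi_0\otimes\alpha)=(W\cdot\psi_0)\otimes\alpha$ while $I^{\Hh}\cdot(\psi_0\otimes\alpha)=\sum_{i<j}(E_i\cdot E_j\cdot\ov\psi_0)\otimes\bigl(I^{\Hh}(E_i,E_j)\cdot\alpha\bigr)$. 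Taking $\psi_0$ of definite chirality, so that $\ov\psi_0=\pm\psi_0$, the odd element $W$ sends $\psi_0$ to the opposite chirality of $\SQ\Hh$, while the even element $E_i\cdot E_j$ preserves chirality. Consequently the $W$-term and the $I^{\Hh}$-term land in the complementary summands $\SQ^{\mp}\Hh\otimes\SQ\VV$ and $\SQ^{\pm}\Hh\otimes\SQ\VV$, and each must therefore vanish on its own.

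It then remains to extract the two geometric conclusions. From $W\cdot\psi_0=0$ for all $\psi_0$ and the invertibility of Clifford multiplication by a non-zero vector, I get $W=0$, which is exactly \eqref{F-Eq}. From $\sum_{i<j}(E_i\cdot E_j\cdot\psi_0)\otimes\bigl(I^{\Hh}(E_i,E_j)\cdot\alpha\bigr)=0$ for all $\psi_0\in\SQ_p\Hh$ (obtained by combining both chiralities), the linear independence of the second-order Clifford monomials $\{E_i\cdot E_j\}_{i<j}$ inside $\mathrm{End}(\SQ_p\Hh)\cong\Cl_n\otimes\CC$ forces $I^{\Hh}(E_i,E_j)\cdot\alpha=0$ for every pair; since $\alpha(p)\neq0$ and a non-zero vector acts invertibly, this yields $I^{\Hh}(E_i,E_j)=0$, i.e. $\Hh$ is integrable. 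Once the parity bookkeeping of the previous paragraph is in place, both conditions of the theorem drop out directly.
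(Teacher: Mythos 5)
Your proof is correct and follows essentially the same route as the paper: both directions are read off the Chain rule, the converse uses the Local existence Lemma to prescribe $\widetilde\psi(p)$, and the integrability and mean-curvature conditions are then decoupled by a parity argument. The only difference is that where the paper asserts the Clifford element $-\tfrac12 X+\tfrac14\sum_{i<j}X_i^*\cdot X_j^*\cdot V^{ij}$ must vanish and separates it by $\ZZ$-degree, you separate the two terms via the chirality grading of $\SQ\Hh$ acting on $\psi_0\otimes\alpha(p)$ --- a slightly more careful version of the same step, since only values in the subspace $\SQ_p\Hh\otimes\CC\,\alpha(p)$ can actually be prescribed.
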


\begin{proof}
Let $\pi$ be a horizontally conformal submersion with integrable horizontal distribution and such that \eqref{F-Eq} is satisfied.
In this case, the Chain rule \eqref{chainhwc} simplifies to
\begin{equation*}
D^{M} \widetilde{\psi}  =
\lambda \widetilde{D^{N}\psi}
+ \sum _{i=1}^{n} E_{i} \cdot (\psi \circ \pi) \otimes \nabla^{\VV}_{E_{i}}\alpha
+(\ov \psi \circ \pi) \otimes \left[D^{\VV}\alpha-\tfrac{n}{2} \mu^{\Hh} \cdot \alpha \right].
\end{equation*}
Let $\psi$ be a local harmonic spinor and $\alpha$ a section of $\Gamma(\SQ \VV)$, $\nabla^{\VV}$-parallel in horizontal directions and with 
$D^{\VV}\alpha - \frac{n}{2} \mu^{\Hh} \cdot \alpha= 0$. From the above formula, it follows that $D^{M} \widetilde{\psi}=0$ and therefore $\pi$ is a Dirac morphism.

Conversely, suppose that $\pi$ is a Dirac morphism. Then, by Definition~\ref{def4}, there exists a horizontally parallel section $\alpha \in \Gamma(\SQ \VV)$ 
satisfying $D^{\VV}\alpha - \frac{n}{2} \mu^{\Hh} \cdot \alpha= 0$, and, for a harmonic spinor field $\psi$ on $N$, we have, according to \eqref{chainhwc}
\begin{align} \label{Chain-Split}
0 = & - \tfrac{1}{2} \left[(m-n) \mu^{\VV}+(n-1)\mathrm{grad}^{\Hh}(\mathrm{ln}\lambda) \right] \cdot \widetilde{\psi} \\
& + \tfrac{1}{4}\sum_{i < j=1}^{n} X_{i}^{*} \cdot X_{j}^{*} \cdot (\psi \circ \pi) \otimes I^{\Hh}(X_{i}^{*} , X_{j}^{*}) \cdot \alpha \notag.
\end{align}
Putting $X=(m-n) \mu^{\VV}+(n-1)\mathrm{grad}^{\Hh}(\mathrm{ln}\lambda)$ and
$V^{ij}=I^{\Hh}(X_{i}^{*} , X_{j}^{*})$, \eqref{Chain-Split}  becomes
$$
0= - \tfrac{1}{2} X \cdot \widetilde{\psi}
+ \tfrac{1}{4}\sum _{i < j=1}^{n} X_{i}^{*} \cdot X_{j}^{*} \cdot V^{ij} \cdot \widetilde{\psi}.
$$
As the value of $\widetilde{\psi}$ at $p \in M$ can be prescribed, the above equation implies
$$
0= - \tfrac{1}{2} X + \tfrac{1}{4}\sum _{i < j=1}^{n} X_{i}^{*} \cdot X_{j}^{*} \cdot V^{ij}.
$$
But since $V^{ij}$ is vertical, $X$ and $X_{i}^{*} \cdot X_{j}^{*} \cdot V^{ij}$ have different degrees, necessarily
$X=0$ and $V^{ij}=0$. Therefore, if a horizontally conformal submersion is Dirac morphism, it must satisfy Equation~\eqref{F-Eq} and have integrable horizontal distribution.
\end{proof}

\begin{re}
\begin{enumerate}
\item Note the analogy between Equation~\eqref{F-Eq} and the fundamental equation for harmonic morphisms in~\cite{B-W}.
\item Compare Formula~\eqref{chainhwc} with \cite[(4.26)]{bis} and \cite[1.1.1]{moro}.\\
\item If the fibres are totally geodesic, the integrability of the horizontal distribution
makes the section $\alpha$ ``basic transversally harmonic'', as introduced
in~\cite{glaz}.
\end{enumerate}
\end{re}

\begin{co}
A Riemannian submersion $\pi : (M^m ,g) \to (N^n ,h)$ between spin manifolds is a Dirac morphism if and only if
its fibres are minimal and its horizontal distribution is integrable.
\end{co}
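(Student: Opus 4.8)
The plan is to specialize Theorem~\ref{th3.2} to the Riemannian case $\lambda \equiv 1$ and show that both the existence of a suitable section $\alpha$ and the fundamental equation~\eqref{F-Eq} reduce to the stated geometric conditions. When $\lambda \equiv 1$, we have $\mathrm{grad}^{\Hh}(\mathrm{ln}\lambda) = 0$, so Equation~\eqref{F-Eq} collapses to $(m-n)\mu^{\VV} = 0$, which (since $m > n$ in the high-dimensional-fibre case) is equivalent to $\mu^{\VV} = 0$, i.e. the fibres are minimal. Likewise, the integrability of $\Hh$ is carried over verbatim. So the content of the corollary is really that, for a Riemannian submersion, the side conditions on $\alpha$ in Definition~\ref{def4} can always be met, and hence Theorem~\ref{th3.2} applies unconditionally.

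First I would dispose of the high-dimensional-fibre case $m - n \geq 2$ by exhibiting a section $\alpha$ satisfying the two conditions of Definition~\ref{def4} \emph{under the assumption that $\mu^{\Hh}$ vanishes}, or else run the argument in the other direction. The cleaner route is: assuming minimal fibres and integrable $\Hh$, note that $\mu^{\Hh} = \tfrac{1}{n}\,\mathrm{grad}^{\VV}(\ln\lambda)$ vanishes as well since $\lambda \equiv 1$, so the second condition on $\alpha$ becomes simply $D^{\VV}\alpha = 0$, a harmonic spinor on each fibre, while the first condition asks that $\alpha$ be $\nabla^{\VV}$-parallel in horizontal directions. One produces such an $\alpha$ locally (for instance a parallel spinor on a flat fibre chart, or by invoking the local existence Lemma~\ref{localexistence} fibrewise together with horizontal parallel transport, which is consistent precisely because $\Hh$ is integrable). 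With $\alpha$ in hand, Theorem~\ref{th3.2} gives the ``if'' direction directly, and its converse gives that any Dirac morphism forces $\mu^{\VV} = 0$ and integrable $\Hh$.

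For the low-dimensional-fibre case $m = n + 1$ the pull-back $\widetilde{\Psi} = [\tilde s, \psi \circ \pi]$ involves no auxiliary section $\alpha$, so Theorem~\ref{th3.2} as stated does not literally apply and one must rerun the Chain rule computation in this setting. Here the bundle identification is $\SQ M = \pi^{-1}\SQ N$ and Clifford multiplication is governed by $V \cdot \widetilde\psi = \mathrm{i}\,\widetilde{\ov\psi}$, so the terms involving $\nabla^{\VV}_{E_i}\alpha$ and $D^{\VV}\alpha$ have no analogue; the surviving obstruction is again $-\tfrac12[\mu^{\VV} + \ldots]\cdot\widetilde\psi + \tfrac14 I^{\Hh}\cdot\widetilde\psi$, and the same degree-counting argument (vertical versus horizontal Clifford degree, using that the value of $\widetilde\psi$ at $p$ is free by Lemma~\ref{localexistence}) separates the two terms to force $\mu^{\VV} = 0$ and $I^{\Hh} = 0$.

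\textbf{The main obstacle} I expect is the local existence of the section $\alpha$ in the case $m - n \geq 2$: one must verify that requiring $\alpha$ to be both $\nabla^{\VV}$-parallel along $\Hh$ and fibrewise harmonic is consistent, and this is exactly where integrability of $\Hh$ enters, since horizontal parallel transport of a fibre spinor is path-independent only when $\Hh$ is integrable. One should check that for a Riemannian submersion with minimal, integrable setup the holonomy obstruction vanishes locally so that a nonvanishing $\alpha$ exists on $\pi^{-1}(U)$ for small $U$; since the conditions on $\alpha$ are designed (as the Remark after Definition~\ref{def4} notes) to make the notion of Dirac morphism independent of $\alpha$, the existence need only be local and the argument is routine once the integrability hypothesis is invoked.
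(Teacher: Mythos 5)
Your core move --- setting $\lambda\equiv 1$ in Theorem~\ref{th3.2}, so that $\mathrm{grad}^{\Hh}(\ln\lambda)=0$ and Equation~\eqref{F-Eq} collapses to $(m-n)\mu^{\VV}=0$, i.e.\ minimal fibres, while $\mu^{\Hh}=\mathrm{grad}^{\VV}(\ln\lambda)=0$ --- is exactly the paper's (unwritten) proof, and that part is correct. The problem is your added claim that ``the side conditions on $\alpha$ in Definition~\ref{def4} can always be met, and hence Theorem~\ref{th3.2} applies unconditionally.'' That is a genuine gap, and in fact the claim is false in general. The section $\alpha$ must be defined on all of $\pi^{-1}(U)$, which contains entire fibres, so the condition $D^{\VV}\alpha=0$ (which is what $D^{\VV}\alpha-\tfrac{n}{2}\mu^{\Hh}\cdot\alpha=0$ becomes here) is a \emph{global} harmonic-spinor equation on each fibre; if the fibres are closed with positive scalar curvature (round spheres, say), Lichnerowicz's theorem forbids any nonzero solution, so no admissible $\alpha$ exists no matter how small $U$ is. Even when fibrewise harmonic spinors do exist, your proposed construction (solve on one fibre, extend by horizontal parallel transport) is not ``routine'': integrability of $\Hh$ gives path-independence of the transport, but you would still have to show that $\nabla^{\VV}$-parallel transport in horizontal directions carries fibrewise harmonic spinors to fibrewise harmonic spinors, which is an extra compatibility condition you do not verify. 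The paper sidesteps all of this by making the existence of $\alpha$ part of Definition~\ref{def4} and an explicit hypothesis of Theorem~\ref{th3.2}; the corollary is to be read with that standing assumption, not as asserting unconditional existence of $\alpha$.

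A smaller point: this corollary sits in Section 3, where the blanket assumption is $m-n\geq 2$, so your third paragraph on the case $m=n+1$ is out of scope here --- the paper treats one-dimensional fibres separately in Section 4, with its own definition (no $\alpha$), its own chain rule, and its own corollary. Your sketch of that case is essentially the paper's Section 4 argument, but it does not belong to the proof of the present statement.
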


Recall that if $\pi$ is a Riemannian submersion then $\mu^{\Hh}=0$.

\begin{re}
If the dilation function $\lambda$ is a projectable function (i.e. $V(\lambda)=0$), the conformal invariance of the Dirac operator
(\cite{law}) allows a correspondence between harmonic spinors of the spaces involved in the commutative diagram below.
\begin{center}
\setlength{\unitlength}{1mm}
\begin{picture}(50,40)
\put(41,27){\vector(0,-3){27}}
\put(3,27){\vector(4,-3){35}}
\put(15,30){\vector(4,0){12}}
\put(8,-4){\vector(4,0){29}}
\put(0,27){\vector(0,-3){27}}
\put(42,33){\makebox(0,0)[t]{$(M, \pi^{*}h+g^{\VV})$}}
\put(0,33){\makebox(0,0)[t]{$(M,  \lambda^{2} \pi^{*}h+g^{\VV})$}}
\put(44,13){\makebox(0,0)[t]{$\pi$}}
\put(0,-2){\makebox(0,0)[t]{$(N, \tilde \lambda^{2}h)$}}
\put(20,0){\makebox(0,0)[t]{\bf 1$_N$}}
\put(20,34){\makebox(0,0)[t]{\bf 1$_M$}}
\put(2,13){\makebox(0,0)[t]{$\pi$}}
\put(43,-2){\makebox(0,0)[t]{$(N, h)$}}
\end{picture}
\end{center}
\end{re}

\bigskip

\section{Dirac morphisms with one-dimensional fibres}

In this section $m=n+1$.

\begin{de}
A horizontally conformal submersion $\pi : (M^{n+1}, g) \rightarrow (N^n, h)$ between spin manifolds is called a Dirac morphism if for any local harmonic spinor $\Psi$ defined on $U \subset N$, such that $\pi^{-1}(U) \neq \emptyset$, the pullback $\widetilde \Psi = \xi_{\gamma} \circ \widetilde \Psi _1$ is a harmonic spinor on $\pi^{-1}(U) \subset M$, where  $\widetilde \Psi _1$ is the pullback of $\Psi$ by the associated Riemannian submersion.
\end{de}

\begin{lm} \ \emph{(Chain rule)}. Let $\pi : (M^{n+1}, g) \rightarrow (N^n, h)$ be a horizontally conformal submersion of dilation $\lambda$ and $\psi$ a (local) spinor field on $N$, then
\begin{equation}\label{Chain-Split1}
\begin{split}
D^{M} \widetilde{\psi} =
&\lambda \widetilde{D^{N}\psi} -
\frac{1}{2} \left[\mu^{\VV}+(n-1)\mathrm{grad}^{\Hh}(\mathrm{ln}\lambda) +n \mu^{\Hh}\right] \cdot \widetilde \psi +
\frac{1}{4} I^{\Hh}\cdot \widetilde \psi.
\end{split}
\end{equation}
\end{lm}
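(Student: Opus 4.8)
The plan is to mimic, block by block, the proof of the Chain rule~\eqref{chainhwc} for high-dimensional fibres, specialising every computation to the case $m=n+1$. First I would fix an orthonormal frame $\{X_i\}_{i=1,\dots,n}$ on $(N,h)$, lift it to the adapted frame $\{V,E_i^1=X_i^*\}$ on $(M,g_1)$ and pass to the $g$-orthonormal adapted frame $\{V,E_i=\la X_i^*\}$, where $V$ is the \emph{single} unit vertical field. Expanding $D^M\widetilde\psi=\sum_i E_i\cdot\nabla_{E_i}\widetilde\psi+V\cdot\nabla_V\widetilde\psi$ and collecting the Christoffel symbols of the spinorial connection splits the expression into the same eight blocks $(H_0),\dots,(H_3),(V_0),\dots,(V_3)$ as before.

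The one-dimensionality of $\VV$ then kills several blocks outright. Since $V$ is a unit field, $g(\nabla_\bullet V,V)=0$, so $(H_3)=0$ and $(V_3)=0$; moreover there are no off-diagonal vertical indices, so $\VV$ is trivially integrable and $(V_2)$ simplifies immediately. Crucially, the pull-back is now $\widetilde\psi=\psi\circ\pi$ with \emph{no} $\SQ\VV$-factor, and being a composition with $\pi$ it is constant along the fibres; hence $V(\psi\circ\pi)=0$ and the zeroth vertical block $(V_0)$ vanishes. This is exactly where the absence of the auxiliary section $\alpha$ makes itself felt: the terms $\sum_i E_i\cdot(\psi\circ\pi)\otimes\nabla^{\VV}_{E_i}\alpha$ and $(\ov\psi\circ\pi)\otimes D^{\VV}\alpha$ of~\eqref{chainhwc} simply disappear.

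The surviving blocks are computed verbatim as in Steps~1, 2, 4 and 5 of the high-dimensional proof, using only $E_i=\la X_i^*$, the fact that $[V,X_i^*]\in\VV$, and the single identity $\mathrm{grad}^{\VV}(\mathrm{ln}\la)=\mu^{\Hh}$ valid for any horizontally conformal submersion. Explicitly, $(H_0)+(H_1)$ reproduces $\la\widetilde{D^N\psi}-\tfrac{n-1}{2}\mathrm{grad}^{\Hh}(\mathrm{ln}\la)\cdot\widetilde\psi$; the block $(V_2)$ gives $-\tfrac12\mu^{\VV}\cdot\widetilde\psi$ (with $m-n=1$); the block $(H_2)$ yields $\tfrac12 I^{\Hh}\cdot\widetilde\psi-\tfrac n2\mu^{\Hh}\cdot\widetilde\psi$, the diagonal $i=j$ terms producing the mean curvature $\mu^{\Hh}$; and $(V_1)$ collapses, as before, to $-\tfrac14 I^{\Hh}\cdot\widetilde\psi$ after substituting $(H_2)$. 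Summing the surviving blocks gives~\eqref{Chain-Split1}.

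The one point requiring genuine care — rather than a real obstacle — is the fate of the term $-\tfrac n2\mu^{\Hh}\cdot\widetilde\psi$. In the high-dimensional case $\mu^{\Hh}$ is vertical and Clifford-acts on the $\SQ\VV$-factor, so this term merged with the $\alpha$-blocks into the combination $(\ov\psi\circ\pi)\otimes[D^{\VV}\alpha-\tfrac n2\mu^{\Hh}\cdot\alpha]$. Here there is no such factor, so $-\tfrac n2\mu^{\Hh}\cdot\widetilde\psi$ survives on its own, read off via $V\cdot\widetilde\psi=\mathrm{i}\widetilde{\ov\psi}$; this is precisely the origin of the extra $+n\mu^{\Hh}$ inside the bracket of~\eqref{Chain-Split1} compared with~\eqref{chainhwc}. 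Keeping the two Clifford relations $X^*\cdot\widetilde\psi=\widetilde{X\cdot\psi}$ and $V\cdot\widetilde\psi=\mathrm{i}\widetilde{\ov\psi}$ straight throughout is the only delicate bookkeeping.
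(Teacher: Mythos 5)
Your proposal is correct and follows essentially the same route as the paper, which itself just reruns the blocks $(H_0),\dots,(V_3)$ of the high-dimensional Chain rule with $(H_0)=\sum_i E_i\cdot E_i(\psi\circ\pi)$, $(V_0)=0$ and $(H_3),(V_3)$ absent. Your accounting of where the $+n\,\mu^{\Hh}$ term comes from (it can no longer be absorbed into an $\alpha$-factor) matches the paper's computation exactly.
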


\begin{proof} Take $\{X_i \}_{i=1,...,n}$
an orthonormal frame on $(N^n, h)$ and
$\{ V, E_i=\lambda X_i ^*\}_{i=1,...,n}$ an adapted frame on $(M^m, g)$. Let $\Psi$ be a (local) spinor field on $N$
and $\widetilde \Psi$ its pullback by $\pi$.
The proof is similar to the proof of Lemma 1, except that $(H_0)= \sum E_i \cdot E_i(\psi \circ \pi)$, $(V_0)=0$ and the terms $(H_3), (V_3)$ do not appear.
\end{proof}

Note that $\mu^{\Hh} \cdot \widetilde \psi =  i \norm{\mu^{\Hh}}^2 \widetilde{\ov{\psi}}$ as $V \cdot \widetilde \psi = i \widetilde{\ov \psi}$.

\begin{te}
A horizontally conformal submersion
$\pi : (M^{n+1} ,g) \to (N^n ,h)$ between spin manifolds is a Dirac morphism if and only if its
horizontal distribution is integrable and minimal, and
$$\mu^{\VV}+(n-1)\mathrm{grad}^{\Hh}(\mathrm{ln}\lambda)=0 ,$$
where $\mu^{\VV}$ is the mean curvature of the fibres.
\end{te}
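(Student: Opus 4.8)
The plan is to follow the scheme of Theorem~\ref{th3.2}, replacing the high-dimensional chain rule \eqref{chainhwc} by its one-dimensional counterpart \eqref{Chain-Split1} and feeding it into the local existence Lemma~\ref{localexistence}. For the \emph{if} direction, assume that $\Hh$ is integrable and minimal and that $\mu^{\VV}+(n-1)\mathrm{grad}^{\Hh}(\mathrm{ln}\lambda)=0$; then $I^{\Hh}=0$ and $\mu^{\Hh}=0$, so every term of \eqref{Chain-Split1} except the first one vanishes and the chain rule collapses to $D^{M}\widetilde\psi=\lambda\,\widetilde{D^{N}\psi}$. Hence the pull-back of any local harmonic spinor $\psi$ on $N$ satisfies $D^{M}\widetilde\psi=0$, and $\pi$ is a Dirac morphism.

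For the converse, suppose $\pi$ is a Dirac morphism. Applying \eqref{Chain-Split1} to a local harmonic spinor $\psi$, so that $\widetilde{D^{N}\psi}=0$, and setting $X=\mu^{\VV}+(n-1)\mathrm{grad}^{\Hh}(\mathrm{ln}\lambda)$ together with $V^{ij}=I^{\Hh}(E_i,E_j)$, gives
\[
0=-\tfrac12\,X\cdot\widetilde\psi-\tfrac{n}{2}\,\mu^{\Hh}\cdot\widetilde\psi+\tfrac14\sum_{i<j=1}^{n}E_i\cdot E_j\cdot V^{ij}\cdot\widetilde\psi .
\]
By Lemma~\ref{localexistence} applied on $N$, for every $p\in M$ the value $\widetilde\psi(p)\in\SQ_pM$ may be prescribed freely, so the Clifford element in brackets annihilates the whole fibre $\SQ_pM$ at each point.

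It remains to read off the three geometric conditions from this single identity, and this is where I expect the real work. Since $X$ is horizontal, $X\cdot\widetilde\psi$ is again of pull-back type $\widetilde{(\,\cdot\,)}$, whereas $\mu^{\Hh}$ and each $V^{ij}$ are vertical, so by $V\cdot\widetilde\psi=\mathrm{i}\,\widetilde{\ov\psi}$ (and the note $\mu^{\Hh}\cdot\widetilde\psi=\mathrm{i}\norm{\mu^{\Hh}}^2\widetilde{\ov\psi}$) the last two terms are of conjugate type $\widetilde{\ov{(\,\cdot\,)}}$. These two types being independent, the identity splits: the pull-back part gives $X\cdot\widetilde\psi=0$ for all prescribed values, forcing $X=0$, which is the displayed equation of the statement. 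For the conjugate part, since $\VV$ is one-dimensional both $\mu^{\Hh}$ and $V^{ij}$ are multiples of a unit vertical field $V$, say $\mu^{\Hh}=cV$ and $V^{ij}=c^{ij}V$, and $V\cdot\widetilde\psi=\mathrm{i}\widetilde{\ov\psi}$ turns that part into $\big(-\tfrac{n}{2}c+\tfrac14\sum_{i<j}c^{ij}E_i\cdot E_j\big)\cdot V\cdot\widetilde\psi=0$.

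The main obstacle is precisely this last extraction: in the odd-dimensional total space $M^{n+1}$ the spinor representation of $\Cl(M)$ is not faithful, so one cannot argue, as in the high-dimensional case, that an annihilating element of $\Cl(M)$ must vanish. The remedy is the factorization above: peeling off the unique vertical direction $V$ transfers the identity into $\Cl(\Hh)=\Cl_n$ acting on $\widetilde{\ov\psi}$, and because $n$ is even this representation \emph{is} faithful. Hence $-\tfrac{n}{2}c+\tfrac14\sum_{i<j}c^{ij}E_i\cdot E_j=0$ in $\Cl_n$; its degree-zero and degree-two components being independent, we conclude $c=0$, i.e. $\mu^{\Hh}=0$ (minimality), and $c^{ij}=0$, i.e. $I^{\Hh}=0$ (integrability), completing the characterization.
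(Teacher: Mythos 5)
Your proof is correct, and it reaches the paper's conclusion by the same overall scheme---the chain rule \eqref{Chain-Split1} fed into the local existence Lemma~\ref{localexistence}---but the final algebraic extraction is genuinely different. The paper sets $X=\mu^{\VV}+(n-1)\mathrm{grad}^{\Hh}(\ln\lambda)+n\mu^{\Hh}$, reuses verbatim the degree argument of Theorem~\ref{th3.2} to get $X=0$ and $I^{\Hh}=0$, and then splits $X=0$ into its two pieces because they lie in the orthogonal distributions $\Hh$ and $\VV$. You instead group $\mu^{\Hh}$ with the integrability term, factor out the unit vertical vector $V$, and transfer that part of the identity into $\Cl_n$ acting on $\SQ_n$, which is faithful since $n$ is even; the degree-$0$ and degree-$2$ components then give $\mu^{\Hh}=0$ and $I^{\Hh}=0$ separately. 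Your route has the merit of confronting head-on the non-faithfulness of the spinor representation of $\Cl_{n+1}$ ($n+1$ odd), a point the paper passes over in silence; the paper's shortcut is nonetheless sound, because for odd-dimensional $M$ the kernel of the irreducible representation pairs degree $d$ with degree $(n+1)-d$, and $\{1,3\}$ is disjoint from $\{n-2,n\}$, so a degree-one plus degree-three element annihilating all spinors still has both components zero. The one soft spot in your write-up is the assertion that the ``pull-back type'' and ``conjugate type'' terms are independent: both are sections of the same bundle $\pi^{-1}\SQ N$ (indeed $\widetilde{\ov{\phi}}$ is itself a pull-back), so the splitting is not free. The clean justification is to write $\ov{\psi_0}$ as the action on $\psi_0$ of the (even) complex volume element of $\Cl_n$, so that the whole identity becomes $\kappa\cdot\psi_0=0$ for all prescribable $\psi_0$ with a single complexified $\kappa\in\Cl_n$ whose odd part is the horizontal vector and whose even part carries $\mu^{\Hh}$ and $I^{\Hh}$; faithfulness of the action of $\Cl_n$ on $\SQ_n$ then kills each graded piece, which both legitimizes your split and completes the argument.
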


\begin{proof}
The argument is similar to the one of Theorem 1, except that
$$
X=\mu ^{\VV} + (n - 1)\mathrm{grad}^{\Hh}(\ln \lambda)  + n \mu^{\Hh}.
$$
Observe that $X=0$ if and only if $\mu ^{\VV} + (n - 1)\mathrm{grad}^{\Hh}(\ln \lambda)=0$ and $\mu^{\Hh}=0$, as they belong to orthogonal distributions.
\end{proof}

\begin{co}
A Riemannian submersion $\pi : (M^{n+1}, g) \rightarrow (N^n, h)$ between spin manifolds is a Dirac morphism if and only if its horizontal distribution is integrable and the fibres are minimal.
\end{co}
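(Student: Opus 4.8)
The plan is to obtain this corollary as an immediate specialization of the preceding theorem to the case $\lambda\equiv 1$, which is exactly what singles out a Riemannian submersion among all horizontally conformal ones. First I would note that a Riemannian submersion $\pi:(M^{n+1},g)\to(N^n,h)$ is a horizontally conformal submersion of constant dilation $\lambda\equiv 1$, so that the hypotheses of the theorem are satisfied verbatim and its characterization may be invoked directly.

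Next I would substitute $\lambda\equiv 1$ into the three conditions appearing in the theorem (integrability of $\Hh$, minimality of $\Hh$, and the equation $\mu^{\VV}+(n-1)\mathrm{grad}^{\Hh}(\ln\lambda)=0$). Since $\ln\lambda\equiv 0$, both $\mathrm{grad}^{\Hh}(\ln\lambda)$ and $\mathrm{grad}^{\VV}(\ln\lambda)$ vanish identically. Invoking the identity $\mu^{\Hh}=\mathrm{grad}^{\VV}(\ln\lambda)$ extracted in the chain rule (and recorded after the high-dimensional corollary), this forces $\mu^{\Hh}=0$: for a Riemannian submersion the horizontal distribution is automatically minimal, so the minimality clause of the theorem imposes no constraint and simply disappears. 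Simultaneously, the fundamental equation collapses to $\mu^{\VV}=0$, that is, minimality of the fibres.

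Collecting these observations, the three conditions of the theorem reduce to just two independent ones, integrability of the horizontal distribution and minimality of the fibres, which yields the stated equivalence. There is essentially no obstacle here; the only point demanding attention is recognizing that $\mu^{\Hh}=0$ holds automatically once $\lambda\equiv 1$, so that the horizontal-minimality hypothesis of the theorem becomes vacuous and does not survive into the statement of the corollary.
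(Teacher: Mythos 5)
Your proposal is correct and matches the paper's (implicit) argument: the corollary is an immediate specialization of the preceding theorem to $\lambda\equiv 1$, using the fact---which the paper records just after the high-dimensional corollary---that $\mu^{\Hh}=0$ automatically for a Riemannian submersion, so the horizontal-minimality clause is vacuous and the fundamental equation reduces to $\mu^{\VV}=0$. Nothing further is needed.
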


\begin{re} Suppose that $\pi$ is a Riemannian submersion.

(1) \ The Chain Rule \eqref{Chain-Split1} gives us the formula of \cite{amoro} (where the fibres are minimal)

\begin{equation}\label{dirak1}
D^{M} \widetilde{\Psi} =
\widetilde{D^{N}\Psi} -\frac{1}{2} \ \mu^{\VV} \cdot \widetilde{\Psi}
-\frac{1}{4} i \sum_{j=1}^{n} X_{j}^{*} \cdot A_{X_{j}^{*}}V \cdot \widetilde{\ov{\Psi}},
\end{equation}
where $A$ denotes the second O'Neill tensor of $\pi$.

(2) \ If $\pi$ is a Dirac morphism, any pullback spinor field is parallel along the fibres and its \textit{metric Lie derivative} (\cite{bour}) vanishes.
\end{re}

\begin{re}
When the fibres are circles, a similar
chain rule was obtained by Ammann in~\cite{Am}.
\end{re}

\section{Examples}

\begin{ex}[Dirac morphisms from 3 to 2-dimensional Euclidean spaces]

With respect to the irreducible representation $\gamma$ of $\Cl_{2}$ given by Pauli matrices,
a spinor field on $(\RR^{2}, \langle, \rangle _{\mathrm{standard}})$ will be
$\psi:\RR^{2} \longrightarrow \CC^{2}$,
$\psi =\left(
\begin{array}{c}
\psi^{+} \\
\psi^{-}
\end{array}
\right)$, in a global spin frame ($e_{1}=\tfrac {\partial}{\partial y_{1}}, e_{2}=\tfrac {\partial}{\partial y_{2}}$) on $\RR^{2}$,
and the Dirac operator on $\RR^{2}$
$$
D^{\RR^{2}}=\gamma(e_{1})\frac {\partial}{\partial y_{1}}+\gamma(e_{2})\frac {\partial}{\partial y_{2}}
=\left(
\begin{array}{cc}
0 & -\tfrac{\partial}{\partial z} \\
\tfrac{\partial}{\partial \ov{z}} & 0
\end{array}
\right).
$$
Consider a Riemannian submersion $\pi:\RR^{3} \longrightarrow \RR^{2}$, the pull-back of $\psi$,
$\widetilde{\psi}=\psi \circ \pi:\RR^{3}\longrightarrow \CC^{2}$ is a spinor field on $\RR^{3}$
with respect to the frame $\{e_{1}^{*}, e_{2}^{*}, v \}$,
$v \in \Ker \dif \pi$. Supposing $\Hh$ integrable, this spin adapted frame can be chosen to be
$\{\tfrac {\partial}{\partial x_{1}}, \tfrac {\partial}{\partial x_{2}}, \tfrac {\partial}{\partial x_{3}} \}$, and
with the Pauli representation, the Dirac operator on $\RR^{3}$ is
$$
D^{\RR^{3}}=i\sigma_{k} \frac {\partial}{\partial x_{k}}
=\left(
\begin{array}{cc}
i \tfrac {\partial}{\partial x_{3}} & \tfrac {\partial}{\partial x_{2}}+i\tfrac {\partial}{\partial x_{1}} \\
-\tfrac {\partial}{\partial x_{2}}+i\tfrac {\partial}{\partial x_{1}} & - i \tfrac {\partial}{\partial x_{3}}
\end{array}
\right).
$$
Let $\psi$ be an arbitrary harmonic spinor on $\RR^{2}$ (so $\psi^{+}$ a holomorphic function), then
$\widetilde{\psi}^{+}$ is also harmonic if and only if
$$
\frac{\partial \pi_{1}}{\partial x_{1}} =
\frac{\partial \pi_{2}}{\partial x_{2}},
\quad
\frac{\partial \pi_{1}}{\partial x_{2}} =
-\frac{\partial \pi_{2}}{\partial x_{1}},
\quad
\frac{\partial \pi_{1}}{\partial x_{3}} =
\frac{\partial \pi_{2}}{\partial x_{3}} = 0.
$$
The analogous question for $\widetilde{\psi}^{-}$ involves a change of sign in the first two equalities
(i.e. $\pi$ must be anti-holomorphic with respect to $x_{1}+ ix_{2}$).

These conditions are exactly the harmonicity of $\pi$, which is equivalent to the minimality of the fibres.

\end{ex}

\begin{ex}[Dirac morphisms from 4 to 2-dimensional Euclidean spaces]

A spinor field on $(\RR^{4}, \langle, \rangle _{\mathrm{standard}})$ can be seen as a $\CC^{4}$-valued function on $\RR^{4}$ once a global spin frame is chosen. Using Pauli matrices, the Dirac operator on $\RR^{4}$ can be described as
$$
D^{\RR^{4}}=\gamma_{k}  \frac {\partial}{\partial x_{k}}
=\left(
\begin{array}{cccc}
0 & 0 & \tfrac {\partial}{\partial x_{3}}+i \tfrac {\partial}{\partial x_{0}} &
\tfrac {\partial}{\partial x_{1}}-i\tfrac {\partial}{\partial x_{2}} \\
0 & 0 & \tfrac {\partial}{\partial x_{1}}+i\tfrac {\partial}{\partial x_{2}} &
-\tfrac {\partial}{\partial x_{3}}+i \tfrac {\partial}{\partial x_{0}} \\
-\tfrac {\partial}{\partial x_{3}}+i \tfrac {\partial}{\partial x_{0}} &
-\tfrac {\partial}{\partial x_{1}}+i\tfrac {\partial}{\partial x_{2}} & 0 & 0\\
-\tfrac {\partial}{\partial x_{1}}-i\tfrac {\partial}{\partial x_{2}} &
\tfrac {\partial}{\partial x_{3}}+i \tfrac {\partial}{\partial x_{0}}  & 0 & 0
\end{array}
\right).
$$
Let $\pi:\RR^{4} \longrightarrow \RR^{2}$ be a Riemannian submersion and $\psi: \RR^{2} \longrightarrow \CC^{2}$
 a spinor field on $(\RR^{2}, \langle, \rangle _{\mathrm standard})$. Then the pull-back $\widetilde{\psi}$ of $\psi$ is
$(\psi \circ \pi) \otimes \alpha :\RR^{4}\longrightarrow \CC^{4}$. With respect to $\{e_{1}^{*},e_{2}^{*},v,w\}$,
$v, w \in \Ker \dif \pi$, assuming $\Hh$ integrable and choosing a frame
$\{\tfrac {\partial}{\partial x_{2}},\tfrac {\partial}{\partial x_{3}}, \tfrac {\partial}{\partial x_{0}},
\tfrac {\partial}{\partial x_{1}} \}$,
$$
\widetilde{\psi}
=\left(
\begin{array}{c}
\widetilde{\psi}^{+} \\
\widetilde{\psi}^{-}
\end{array}
\right)
=\left(
\begin{array}{c}
\psi^{+} \alpha^{+} \\
\psi^{-} \alpha^{-} \\
\psi^{+} \alpha^{-} \\
\psi^{-} \alpha^{+}
\end{array}
\right).
$$
The conditions of harmonicity and parallelism make $\alpha: \RR^{2} \rightarrow \CC^{2}$
a harmonic spinor field with respect to the variables $x_{0}, x_{1}$ (i.e. $\alpha^{+}$ is holomorphic
and $\alpha^{-}$ anti-holomorphic).
Take a harmonic spinor $\psi$ on $\RR^{2}$
(i.e. $\psi^{+}$ is a holomorphic function), one can directly check that $\widetilde{\psi}^{+}$
is harmonic, for any $\alpha$, if and only if $\pi$ satisfies
$$
\frac{\partial \pi_{1}}{\partial x_{0}} =\frac{\partial \pi_{1}}{\partial x_{1}} =
\frac{\partial \pi_{2}}{\partial x_{0}}=\frac{\partial \pi_{2}}{\partial x_{1}}= 0,
\quad \frac{\partial \pi_{1}}{\partial x_{2}} =
\frac{\partial \pi_{2}}{\partial x_{3}}, \quad
\frac{\partial \pi_{1}}{\partial x_{3}} =
-\frac{\partial \pi_{2}}{\partial x_{2}}.
$$

The same question for $\widetilde{\psi}^{-}$ merely introduces a different sign. Again, this forces $\pi$ to be harmonic, i.e. its fibres are minimal.

\end{ex}

\begin{ex}[Moroianu's projectable spinors]\label{exmoro}

In \cite{moro}, Moroianu considers a principal fibre bundle $\pi: (M^{m},g)
\longrightarrow N$ with compact structural group $G$, over a compact spin manifold
$(N^{n},h)$, such that $\pi$ is a Riemannian submersion with totally geodesic fibres
and the horizontal distribution $\Hh$ is a principal connection.

Since its tangent space is trivial, $G$ admits a canonical spin structure, and a
spinor $(\psi \circ \pi) \otimes \alpha$ is called projectable if $\alpha: G
\longrightarrow \SQ_{m-n}$ is a constant function with respect to the canonical frame
of left-invariant vector fields. To have a $D^M$-invariant notion of projectable
spinor, it is necessary and sufficient to suppose $G$ commutative (\cite{moro})

Let $X^{*}$ be the horizontal lift of a vector field $X$ on $N$ and, using $[V,
X^{*}]=0$ for $V\in \VV$, since $\Hh$ is a principal connection, we have
\begin{equation*}
\begin{split}
\nabla^{\VV}_{X^{*}}\alpha=& X(\alpha)+ \tfrac{1}{2}\sum _{b<c=1}^{m-n} g(\nabla_{X^{*}}V_{b}, V_{c})V_{b} \cdot V_{c} \cdot \alpha\\
=&X^{*}(\alpha)+ \tfrac{1}{2}\sum _{b<c=1}^{m-n} g(\nabla_{V_{b}}X^{*}, V_{c})V_{b} \cdot V_{c} \cdot \alpha\\
=&X^{*}(\alpha)- \tfrac{1}{2}\sum _{b<c=1}^{m-n} g(X^{*}, \nabla_{V_{b}}V_{c})V_{b} \cdot V_{c} \cdot \alpha\\
=&X^{*}(\alpha),
\end{split}
\end{equation*}
since the fibres are totally geodesic. Therefore the condition of parallelism of $\alpha$ translates into constancy in horizontal directions. Moreover

\begin{equation*}
\begin{split}
&D^{\VV}\alpha = V_{a} \cdot \nabla^{\VV}_{V_{a}}\alpha \\
&=\sum _{a=1}^{m-n}  V_{a} \cdot V_{a}(\alpha)
+ \tfrac{1}{4}\sum _{a, b, c=1}^{m-n} V_{a} \cdot g(\nabla_{V_{a}}V_{b}, V_{c}) \ V_{b} \cdot V_{c} \cdot \alpha \\
&= \sum _{a=1}^{m-n}  V_{a} \cdot V_{a}(\alpha)
+ \tfrac{3}{4}\sum _{a < b < c=1}^{m-n} g([V_{a},V_{b}], V_{c}) \ V_{a} \cdot \ V_{b} \cdot V_{c} \cdot \alpha, \\
\end{split}
\end{equation*}
where, on a Lie group, $g([V,W], Z)=g(V, [W,Z])$ if $g$ is an invariant metric.
Clearly, if $\alpha$ is a constant function and $G$ is commutative (i.e. the
structural constants vanish) then $D^{\VV}\alpha =0$.

Therefore a projectable spinor field is the pull-back of some spinor field on the
base, with respect to $\alpha$ satisfying the two conditions required for Dirac
morphisms. Hence, Theorem~\ref{th3.2} says that a principal bundle over a spin
manifold with commutative structural group is a Dirac morphism if and only if it is
flat.
\end{ex}

\end{document}